\documentclass[11pt]{article}

\usepackage{
    amsmath,
    amssymb,
    amsthm,
    fullpage,
    thmtools,
    thm-restate,
    tikz
}
\usepackage[colorlinks=true, allcolors=blue]{hyperref}
\usepackage[capitalize,noabbrev]{cleveref}

\usetikzlibrary{decorations.pathreplacing}
\usetikzlibrary{arrows.meta}

\newtheorem{theorem}{Theorem}[section]

\newtheorem{lemma}[theorem]{Lemma}

\theoremstyle{definition}
\newtheorem{definition}[theorem]{Definition}

\newtheorem{problem}[theorem]{Problem}

\newcommand{\ith}[1]{$#1^{\text{th}}$}
\newcommand{\Homclass}[4]{\Hom_{#1\to#2}(#3,#4)}
\newcommand{\homclass}[4]{\hom_{#1\to#2}(#3,#4)}
\newcommand{\trans}{\text{T}}
\newcommand{\mathematica}{\texttt{Mathematica}}

\newcommand{\even}{\rm even}
\newcommand{\odd}{\rm odd}

\DeclareMathOperator{\diag}{diag}
\DeclareMathOperator{\Hom}{Hom}
\DeclareMathOperator{\ind}{ind}

\newcommand{\change}[1]{\widehat{#1}}

\title{Long paths need not minimize $H$-colorings among trees}

\author{
David Galvin\thanks{Galvin is in part supported by a Simons Collaboration Grant for Mathematicians.}\\
University of Notre Dame\\
{\tt dgalvin1@nd.edu}
\and
Emily McMillon\thanks{McMillon is in part supported by NSF DMS-2303380.}\\
Rice University\\
{\tt em72@rice.edu}
\and
JD Nir\\
Oakland University\\
{\tt jdnir@oakland.edu}
\and
Amanda Redlich\\
University of Massachusetts Lowell\\
{\tt amanda\_redlich@uml.edu}
}

\begin{document}

\maketitle

\begin{abstract}
Given a graph $G$ and a target graph $H$, an \textit{$H$-coloring} of $G$ is an adjacency-preserving vertex map from $G$ to $H$. By appropriate choice of $H$, these colorings can express, for instance, the independent sets or proper vertex colorings of $G$. 

Sidorenko proved that for any $H$, the $n$-vertex star admits at least as many $H$-colorings as any other $n$-vertex tree, but the minimization question remains open in general. For many graphs $H$, path graphs are among the trees with the fewest $H$-colorings, but work of Leontovich and subsequently Csikv\'ari and Lin shows that there is a graph $E_7$ on seven vertices and a target graph $H$ for which there are strictly fewer $H$-colorings of $E_7$ than of the path on seven vertices.

We introduce a new strategy for enumerating homomorphisms from path-like trees to highly symmetric target graphs that allows us to make the previous observations completely explicit and extend them to infinitely many $n$ beyond $n=7$. In particular, we exhibit a target graph $H$ with the property that for each sufficiently large $n$, there is a tree $E_n$ on $n$ vertices that admits strictly fewer $H$-colorings than the path on $n$ vertices.
\end{abstract}

\section{Introduction}

In this paper, we consider $H$-colorings of graphs $G$: labelings of the vertices of $G$ by vertices of $H$ such that if vertices are adjacent in $G$, their labels are adjacent in $H$. More formally, for a simple, loopless graph $G$ on vertex set $V(G)$ and a target graph $H$ on vertex set $V(H)$ (possibly with loops, but without multi-edges), an \emph{$H$-coloring} of $G$ is an adjacency-preserving map  $f:V(G) \to V(H)$ (that is, a map satisfying $f(x) \sim_H f(y)$ whenever $x\sim_G y$). Denote by $\Hom(G,H)$ the set of $H$-colorings of $G$ and by $\hom(G,H)$ the number of $H$-colorings of $G$. Note that an $H$-coloring of $G$ is exactly a homomorphism from $G$ to $H$, explaining our choice of notation.

Many important graph notions can be encoded via $H$-colorings, including \emph{proper $q$-colorings} (using $H=K_q$, the complete graph on $q$ vertices) and \emph{independent} (or \emph{stable}) \emph{sets} (using $H=H_{\ind}$, an edge with one looped endvertex). Lov\'asz's monograph~\cite{Lovasz2012} explores connections between $H$-colorings and graph limits, quasi-randomness, and property testing. The language of $H$-coloring is also ideally suited for the mathematical study of hard-constraint spin models from statistical physics (see e.g.~\cite{BrightwellWinkler1999, BrightwellWinkler2002}).

The following extremal enumerative question for $H$-coloring has a long history: given a family ${\mathcal G}$ of graphs and a target graph $H$, which $G \in {\mathcal G}$ maximizes or minimizes $\hom(G,H)$? This question can be traced back to Birkhoff's attacks on the 4-color conjecture (see~\cite{Birkhoff1912, BirkhoffLewis1946}), but recent attention is due more to Wilf's and (independently) Linial's mid-1980's queries~\cite{Linial1986, Wilf1984} as to which $n$-vertex, $m$-edge graph has the most proper $q$-colorings (i.e.~admits the most $K_q$-colorings). For a taste of the wide variety of results and conjectures on the extremal enumerative $H$-coloring question, see the surveys~\cite{Cutler2012, Zhao2017}.

In this paper we consider the extremal enumerative $H$-coloring question for the family ${\mathcal T}_n$, the set of all trees on $n$ vertices. For fixed $H$, adding edges to a graph $G$ weakly decreases the number of $H$-colorings that $G$ admits. The $n$-vertex connected graph that admits the most $H$-colorings, then, must be minimally connected, i.e.~some member of ${\mathcal T}_n$. This family has two natural candidates for extremality, namely the path $P_n$ and the star $S_n=K_{1,n-1}$, and indeed Prodinger and Tichy~\cite{ProdingerTichy1982} showed that these are the extremal trees when counting independent sets: for all $T_n \in {\mathcal T}_n$,
\begin{equation} \label{inq:PT}
\hom(P_n,H_{\ind}) \le \hom(T_n,H_{\ind}) \le \hom(S_n,H_{\ind}).
\end{equation}

The Hoffman-London matrix inequality (see e.g.~\cite{Hoffman1967,London1966,Sidorenko1985}) is equivalent to the statement $\hom(P_n,H) \le \hom(S_n,H)$ for \emph{all} $H$ and $n$. Sidorenko \cite{Sidorenko1994} obtained a significant generalization of the Hoffman-London inequality:
\begin{theorem}[Sidorenko] \label{thm:siderenko}
Fix $H$ and $n \ge 1$.  For any $T_n \in {\mathcal T}_n$,
\[
\hom(T_n,H) \le \hom(S_n,H).
\]
\end{theorem}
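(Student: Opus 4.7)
My plan is induction on $n$, with the inductive step built around a local tree transformation---a ``shift''---that weakly increases $\hom(\cdot,H)$ while moving $T_n$ strictly closer to $S_n$. The base cases $n\le 2$ are trivial since $T_n = S_n$. For the inductive step, given $T=T_n\ne S_n$, there exists an edge $uv\in E(T)$ with $\deg_T(u),\deg_T(v)\ge 2$; form $T^*$ from $T$ by detaching every subtree hanging off $u$ other than the one containing $v$ and reattaching each to $v$, leaving $u$ as a pendant leaf of $v$. Since $T^*$ has strictly fewer internal vertices than $T$, iterating the shift reaches $S_n$ in finitely many steps, reducing the task to showing $\hom(T,H)\le\hom(T^*,H)$ for a single shift.

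To compare $\hom(T,H)$ and $\hom(T^*,H)$, I would root both trees at the edge $uv$ and use the standard product factorization of rooted homomorphism counts. Let $N_R(w)$ denote the number of homomorphisms of a rooted subtree $R\to H$ sending the root to $w$; let $A_1,\dots,A_p$ and $B_1,\dots,B_q$ be the $u$-side and $v$-side subtrees of $T$; and set
\[
\alpha(w)=\prod_{i=1}^p\sum_{z\sim_H w}N_{A_i}(z),\qquad \beta(w)=\prod_{j=1}^q\sum_{z\sim_H w}N_{B_j}(z).
\]
Then
\[
\hom(T,H)=\sum_{xy\in E(H)}\alpha(x)\beta(y), \qquad \hom(T^*,H)=\sum_{xy\in E(H)}\alpha(y)\beta(y),
\]
so the goal reduces to the inequality $\sum_{xy\in E(H)}\alpha(x)\beta(y)\le\sum_{xy\in E(H)}\alpha(y)\beta(y)$.

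The main obstacle is that this inequality \emph{fails} pointwise in $y$: it is not true in general that $\sum_{x\sim_H y}\alpha(x)\le d_H(y)\alpha(y)$, so any proof must exploit the global symmetry of the edge sum over $H$. A natural route is Cauchy--Schwarz,
\[
\Bigl(\sum_{xy\in E(H)}\alpha(x)\beta(y)\Bigr)^{\!2}\le \Bigl(\sum_{xy\in E(H)}\alpha(x)^2\Bigr)\Bigl(\sum_{xy\in E(H)}\beta(y)^2\Bigr),
\]
since each factor on the right is itself a homomorphism count from a ``doubled'' tree---two copies of the $u$-side (resp.\ $v$-side) of $T$ glued at $u$ (resp.\ $v$) together with a single extra pendant vertex. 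The hope is that each doubled tree is, up to a further shift, a tree on which induction applies.

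The delicate part, and what I expect to be the real obstruction, is making the induction genuinely terminate: the doubled trees produced by Cauchy--Schwarz have more vertices than $T$, so naive induction on $n$ does not close. I would therefore recast the induction on a finer invariant---for instance the lexicographic pair (number of internal vertices, number of vertices)---and choose the shift edge $uv$ carefully, most cleanly on a longest path in $T$, so that the auxiliary doubled trees are either already stars (allowing direct calculation via $\hom(S_k,H)=\sum_v d_H(v)^{k-1}$) or one shift step closer to a star. Arranging this bookkeeping so that both the original shift $T\to T^*$ and the two auxiliary inductive comparisons decrease the chosen invariant is where the bulk of the work lies; the pointwise manipulations themselves are routine.
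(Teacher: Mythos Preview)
First, a clarification: the paper does not give its own proof of this theorem; it is quoted as a known result of Sidorenko, with pointers to \cite{Sidorenko1994,CsikvariLin2014,LevinPeres2017,LuchtrathMonch2024} for proofs. So there is no in-paper argument to match against.

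On the substance: your reduction is sound and is essentially the Csikv\'ari--Lin strategy---repeatedly shift branches from one endpoint of an edge to the other, decreasing the number of internal vertices, until you reach $S_n$; it then suffices to prove the single-step monotonicity $\hom(T,H)\le\hom(T^*,H)$. The gap is in how you propose to prove that monotonicity. After Cauchy--Schwarz you obtain
\[
\hom(T,H)^2 \;\le\; \Bigl(\sum_{xy}\alpha(x)^2\Bigr)\Bigl(\sum_{xy}\beta(y)^2\Bigr)\;=\;\hom(T_A,H)\,\hom(T_B,H),
\]
but the \emph{same} Cauchy--Schwarz applied to $\hom(T^*,H)=\sum_{xy}\alpha(y)\beta(y)$ yields the \emph{same} upper bound $\hom(T^*,H)^2\le\hom(T_A,H)\,\hom(T_B,H)$: both quantities sit below a common ceiling and cannot be compared this way. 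Nor does jumping straight to the star rescue the argument: applying the inductive hypothesis to the doubled trees gives $\hom(T,H)^2\le\hom(S_{|T_A|},H)\,\hom(S_{|T_B|},H)$ with $|T_A|+|T_B|=2n$, and you would then need $\hom(S_a,H)\,\hom(S_{2n-a},H)\le\hom(S_n,H)^2$. But $\hom(S_k,H)=\sum_v d_H(v)^{k-1}$ is log-\emph{convex} in $k$ (again by Cauchy--Schwarz), so this inequality goes the wrong way. The obstruction is therefore not termination of an induction---no finer invariant fixes it---but the direction of the estimates themselves.

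The published arguments close the shift inequality by a different mechanism: Csikv\'ari--Lin establish $\hom(T^*,H)-\hom(T,H)\ge 0$ via an algebraic identity tailored to the shift, while \cite{LevinPeres2017,LuchtrathMonch2024} bypass the shift entirely. If you want to keep your (correct) reduction, the missing ingredient is an identity or rearrangement that compares $\sum_{xy}\alpha(x)\beta(y)$ and $\sum_{xy}\alpha(y)\beta(y)$ directly, exploiting that $\alpha$ and $\beta$ are themselves built from adjacency sums---not a Cauchy--Schwarz upper bound that forgets this structure.
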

In other words, for any target graph $H$, the $n$-star not only admits at least as many $H$-colorings as the path but at least as many as any tree on $n$ vertices. See also~\cite{CsikvariLin2014,LevinPeres2017,LuchtrathMonch2024} for alternate proofs, and~\cite{EngbersGalvin2017} for a proof valid for large $n$.

It is tempting to speculate that for an arbitrary $H$, the $n$-vertex path should admit the fewest $H$-colorings among $n$-vertex trees; the star and the path are, both heuristically and in some quantifiable senses, the most extreme trees, so if the star is a maximizer for any $H$, should not the path always be a minimizer? However, this speculation is incorrect. Csikv\'{a}ri and Lin~\cite{CsikvariLin2014}, following earlier work of Leontovich~\cite{Leontovich1989}, show that there exists a target graph $H$ and a tree $E_7$ on seven vertices such that $\hom(E_7,H) < \hom(P_7,H)$. Their results motivate the following definition.
\begin{definition}
We say a graph $H$ is \emph{Leontovich at $n$} if there is a tree $T_n$ on $n$ vertices such that $\hom(T_n,H) < \hom(P_n,H)$ and we say $H$ is \emph{strongly Leontovich} if $H$ is Leontovich at $n$ for all sufficiently large $n$.
\end{definition}
\noindent So the earlier work cited on this problem shows that there is a target graph that is Leontovich at $n=7$.

The present paper has two goals: we first make the observations of Csikv\'{a}ri and Lin and of Leontovich completely explicit by identifying a specific $H$ that is Leontovich at $n=7$, and then we extend their observations to infinitely many $n$ beyond seven. For each of these goals, we use the same basic construction for $H$, which we now introduce. For integers $x, y, z$, let $T(x,y,z)$ be the rooted tree where the root has $x$ children, each child of the root has $y$ children, and each grandchild of the root has $z$ children (see \cref{fig:Txyz}). Note that $T(x,y,z)$ is an instance of a \emph{spherically symmetric} tree --- a rooted tree in which the number of children a vertex has depends only on its distance from the root.

\begin{figure}[ht!]
    \centering
    \begin{tikzpicture}
        \coordinate (z1) at (0,0) {};
        \coordinate (z2) at (0.25,0);
        \coordinate (z3) at (1,0);
        \coordinate (z4) at (1.25,0);
        \coordinate (z5) at (1.5,0);
        \coordinate (z6) at (2.25,0);
        \coordinate (z7) at (2.5,0);
        \coordinate (z8) at (2.75,0);
        \coordinate (z9) at (3.5,0);

        \coordinate (z10) at (4,0) {};
        \coordinate (z20) at (4.25,0);
        \coordinate (z30) at (5,0);
        \coordinate (z40) at (5.25,0);
        \coordinate (z50) at (5.5,0);
        \coordinate (z60) at (6.25,0);
        \coordinate (z70) at (6.5,0);
        \coordinate (z80) at (6.75,0);
        \coordinate (z90) at (7.5,0);

        \coordinate (z100) at (8.5,0) {};
        \coordinate (z200) at (8.75,0);
        \coordinate (z300) at (9.5,0);
        \coordinate (z400) at (9.75,0);
        \coordinate (z500) at (10,0);
        \coordinate (z600) at (10.75,0);
        \coordinate (z700) at (11,0);
        \coordinate (z800) at (11.25,0);
        \coordinate (z900) at (12,0);

        \coordinate (y1) at (0.5,2);
        \coordinate (y2) at (1.75,2);
        \coordinate (y3) at (3,2);

        \coordinate (y10) at (4.5,2);
        \coordinate (y20) at (5.75,2);
        \coordinate (y30) at (7,2);

        \coordinate (y100) at (9,2);
        \coordinate (y200) at (10.25,2);
        \coordinate (y300) at (11.5,2);

        \coordinate (x1) at (2.25,4);
        \coordinate (x10) at (6.25,4);
        \coordinate (x100) at (10.75,4);

        \coordinate (r) at (8,6);

        \draw[fill=black,thick] (z1)--(y1);
        \draw[fill=black,thick] (z2)--(y1);
        \draw[fill=black,thick] (z3)--(y1);
        \draw[fill=black,thick,dotted] (0.5,0.5)--(0.75,0.5);
        \draw[fill=black,thick] (z4)--(y2);
        \draw[fill=black,thick] (z5)--(y2);
        \draw[fill=black,thick] (z6)--(y2);
        \draw[fill=black,thick,dotted] (1.75,0.5)--(2,0.5);
        \draw[fill=black,thick] (z7)--(y3);
        \draw[fill=black,thick] (z8)--(y3);
        \draw[fill=black,thick] (z9)--(y3);
        \draw[fill=black,thick,dotted] (3,0.5)--(3.25,0.5);

        \draw[fill=black,thick] (z10)--(y10);
        \draw[fill=black,thick] (z20)--(y10);
        \draw[fill=black,thick] (z30)--(y10);
        \draw[fill=black,thick,dotted] (4.5,0.5)--(4.75,0.5);
        \draw[fill=black,thick] (z40)--(y20);
        \draw[fill=black,thick] (z50)--(y20);
        \draw[fill=black,thick] (z60)--(y20);
        \draw[fill=black,thick,dotted] (5.75,0.5)--(6,0.5);
        \draw[fill=black,thick] (z70)--(y30);
        \draw[fill=black,thick] (z80)--(y30);
        \draw[fill=black,thick] (z90)--(y30);
        \draw[fill=black,thick,dotted] (7,0.5)--(7.25,0.5);

        \draw[fill=black,thick] (z100)--(y100);
        \draw[fill=black,thick] (z200)--(y100);
        \draw[fill=black,thick] (z300)--(y100);
        \draw[fill=black,thick,dotted] (9,0.5)--(9.25,0.5);
        \draw[fill=black,thick] (z400)--(y200);
        \draw[fill=black,thick] (z500)--(y200);
        \draw[fill=black,thick] (z600)--(y200);
        \draw[fill=black,thick,dotted] (10.25,0.5)--(10.5,0.5);
        \draw[fill=black,thick] (z700)--(y300);
        \draw[fill=black,thick] (z800)--(y300);
        \draw[fill=black,thick] (z900)--(y300);
        \draw[fill=black,thick,dotted] (11.5,0.5)--(11.75,0.5);

        \draw[fill=black,thick] (y1)--(x1);
        \draw[fill=black,thick] (y2)--(x1);
        \draw[fill=black,thick] (y3)--(x1);
        \draw[fill=black,thick,dotted] (2.25,2.5)--(2.5,2.5);

        \draw[fill=black,thick] (y10)--(x10);
        \draw[fill=black,thick] (y20)--(x10);
        \draw[fill=black,thick] (y30)--(x10);
        \draw[fill=black,thick,dotted] (6.25,2.5)--(6.5,2.5);

        \draw[fill=black,thick] (y100)--(x100);
        \draw[fill=black,thick] (y200)--(x100);
        \draw[fill=black,thick] (y300)--(x100);
        \draw[fill=black,thick,dotted] (10.75,2.5)--(11,2.5);

        \draw[fill=black,thick] (x1)--(r);
        \draw[fill=black,thick] (x10)--(r);
        \draw[fill=black,thick] (x100)--(r);
        \draw[fill=black,thick,dotted] (8.25,4.5)--(8.5,4.5);

        \draw[fill=black] (z1) circle (1.5pt);
        \draw[fill=black] (z2) circle (1.5pt);
        \draw[fill=black] (z3) circle (1.5pt);
        \draw[fill=black] (z4) circle (1.5pt);
        \draw[fill=black] (z5) circle (1.5pt);
        \draw[fill=black] (z6) circle (1.5pt);
        \draw[fill=black] (z7) circle (1.5pt);
        \draw[fill=black] (z8) circle (1.5pt);
        \draw[fill=black] (z9) circle (1.5pt);
        \draw[fill=black] (z10) circle (1.5pt);
        \draw[fill=black] (z20) circle (1.5pt);
        \draw[fill=black] (z30) circle (1.5pt);
        \draw[fill=black] (z40) circle (1.5pt);
        \draw[fill=black] (z50) circle (1.5pt);
        \draw[fill=black] (z60) circle (1.5pt);
        \draw[fill=black] (z70) circle (1.5pt);
        \draw[fill=black] (z80) circle (1.5pt);
        \draw[fill=black] (z90) circle (1.5pt);
        \draw[fill=black] (z100) circle (1.5pt);
        \draw[fill=black] (z200) circle (1.5pt);
        \draw[fill=black] (z300) circle (1.5pt);
        \draw[fill=black] (z400) circle (1.5pt);
        \draw[fill=black] (z500) circle (1.5pt);
        \draw[fill=black] (z600) circle (1.5pt);
        \draw[fill=black] (z700) circle (1.5pt);
        \draw[fill=black] (z800) circle (1.5pt);
        \draw[fill=black] (z900) circle (1.5pt);

        \draw[fill=black] (y1) circle (1.5pt);
        \draw[fill=black] (y2) circle (1.5pt);
        \draw[fill=black] (y3) circle (1.5pt);
        \draw[fill=black] (y10) circle (1.5pt);
        \draw[fill=black] (y20) circle (1.5pt);
        \draw[fill=black] (y30) circle (1.5pt);
        \draw[fill=black] (y100) circle (1.5pt);
        \draw[fill=black] (y200) circle (1.5pt);
        \draw[fill=black] (y300) circle (1.5pt);

        \draw[fill=black] (x1) circle (1.5pt);
        \draw[fill=black] (x10) circle (1.5pt);
        \draw[fill=black] (x100) circle (1.5pt);

        \draw[fill=black] (r) circle (1.5pt);

        \node at (13.5,6) {\begin{tabular}{c} {\small 1 vertex} \\ {\small down degree $x$} \end{tabular}};

        \node at (13.5,4) {\begin{tabular}{c} {\small $x$ vertices} \\ {\small down degree $y$} \end{tabular}};

        \node at (13.5,2) {\begin{tabular}{c} {\small $xy$ vertices} \\ {\small down degree $z$} \end{tabular}};

        \node at (13.5,0) {\begin{tabular}{c} {\small $xyz$ vertices}\end{tabular}};
    \end{tikzpicture}
    \caption{A visual representation of the graph $T(x,y,z)$.} \label{fig:Txyz}
\end{figure}
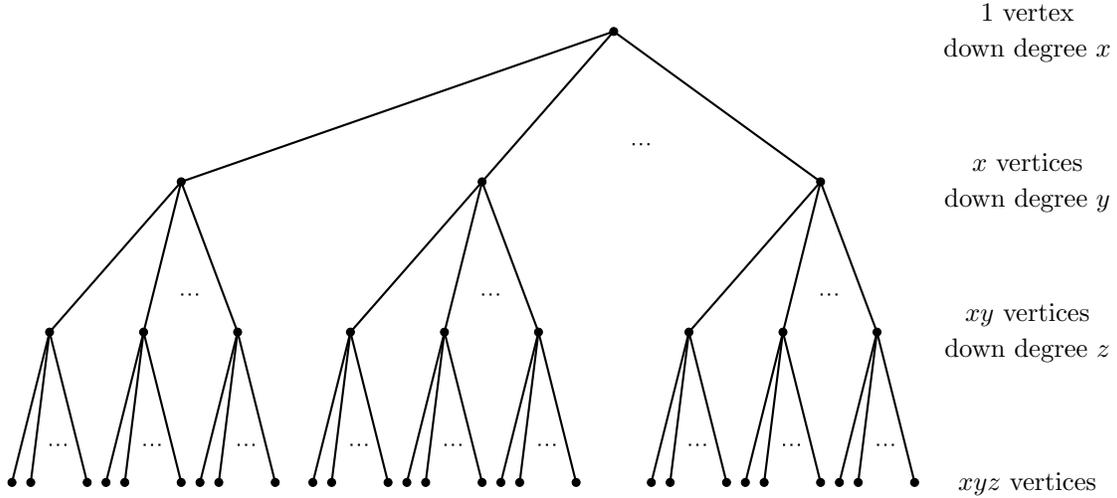

Let $E_7$ be the tree obtained from $P_6$ by adding a pendant edge from one of the vertices that is third from the end of the path (see \cref{fig:e7-p7}). Leontovich~\cite{Leontovich1989} observed that there is some triple $(x,y,z)$ that yields the inequality
\begin{equation} \label{inq-Leontovich}
\hom(E_7,T(x,y,z)) < \hom(P_7,T(x,y,z)).
\end{equation}
Leontovich's observation was fleshed out by Csikv\'ari and Lin~\cite{CsikvariLin2014}, who observe that inequality \eqref{inq-Leontovich} can be achieved (meaning $T(x,y,z)$ is Leontovich at $n=7$) by taking sufficiently large $x, y, z$ satisfying $y \ll x \ll z \ll xy$, but they do not provide explicit values for $x$, $y$, and $z$. We make the observations of Leontovich and of Csikv\'ari and Lin explicit in the next result.

\begin{restatable}{theorem}{explicitexample} \label{thm:first_explicit_example}
The graph $T(18,3,32)$ is Leontovich at $n=7$. Specifically, we have 
\[\hom(E_7,T(18,3,32)) < \hom(P_7,T(18,3,32)).\]
\end{restatable}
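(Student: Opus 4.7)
My plan is to reduce both $\hom(P_7,H)$ and $\hom(E_7,H)$, with $H=T(18,3,32)$, to explicit level-weighted sums and then verify the inequality by direct arithmetic. Writing $w_k(v)$ for the number of walks of length $k$ in $H$ starting at $v$, the symmetry of the adjacency matrix $A$ of $H$ gives
\[
\hom(P_7,H) \;=\; \mathbf{1}^{\top} A^{6} \mathbf{1} \;=\; \|A^{3}\mathbf{1}\|^{2} \;=\; \sum_{v\in V(H)} w_3(v)^{2}.
\]
On the other hand $E_7$ is the spider with a single center of degree $3$ and three arms of lengths $1,2,3$ (totalling $1 + (1+2+3) = 7$ vertices), and splitting a homomorphism at the center yields
\[
\hom(E_7,H) \;=\; \sum_{v\in V(H)} w_1(v)\,w_2(v)\,w_3(v).
\]

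Next I would exploit the spherical symmetry of $T(x,y,z)$: each vertex lies at a unique level $\ell\in\{0,1,2,3\}$ from the root, with $n_\ell = 1,\,x,\,xy,\,xyz$ vertices at the four levels, and $w_k(v)$ depends only on $\ell(v)$. Writing $w_k^{(\ell)}$ for this common value, the fact that only consecutive levels are adjacent (with known multiplicities $1$ parent, and $x,y,z$ children) means $w_k^{(\ell)}$ satisfies a simple $4$-state linear recursion starting from $w_0^{(\ell)}\equiv 1$. Iterating this recursion three times with $(x,y,z)=(18,3,32)$ produces integer vectors $w_1$, $w_2$, $w_3$ with small entries.

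Plugging these values into
\[
\hom(P_7,H) \;=\; \sum_{\ell=0}^{3} n_\ell\,\bigl(w_3^{(\ell)}\bigr)^{2}, \qquad \hom(E_7,H) \;=\; \sum_{\ell=0}^{3} n_\ell\,w_1^{(\ell)}\,w_2^{(\ell)}\,w_3^{(\ell)},
\]
reduces the theorem to the comparison of two explicit positive integers. The main obstacle is not conceptual but numerical: the Csikv\'ari--Lin heuristic $y\ll x\ll z\ll xy$ places $(18,3,32)$ in a regime where $\hom(P_7,H)$ and $\hom(E_7,H)$ are forced to be very close (indeed I expect their difference to be many orders of magnitude smaller than either count), so the final inequality is tight and any arithmetic slip would reverse it. I would therefore double-check the last step by an independent symbolic computation in \mathematica\ to confirm the sign of the difference.
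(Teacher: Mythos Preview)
Your proposal is correct. Both you and the paper exploit the orbit partition of $T(x,y,z)$ into four levels and reduce the counts to a $4$-state walk recursion (your recursion is exactly multiplication by the paper's automorphic similarity matrix $M$), so the underlying mechanism is the same. The difference is only in how the two trees are decomposed: you split $P_7$ at its midpoint via $\mathbf{1}^{\top}A^{6}\mathbf{1}=\|A^{3}\mathbf{1}\|^{2}$ and split $E_7$ at its spider center to obtain $\sum_v w_1(v)w_2(v)w_3(v)$, whereas the paper treats both trees uniformly as the seed $G_0=P_4$ with a $3$-path appended (to a leaf for $P_7$, to an interior vertex for $E_7$), giving $\hom(\,\cdot\,,H)=\mathbf{a}(H)M^{3}\mathbf{h}(P_4,v_0)$. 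Your decomposition is slightly more direct for this single computation; the paper's path-like-family formulation is chosen because replacing $M^{3}$ by $M^{n}$ immediately feeds into the asymptotic argument for \cref{thm:path_not_eventually_minimizer}. Carrying out your arithmetic yields $\hom(P_7,H)=81558090$ and $\hom(E_7,H)=81548856$, matching the paper and confirming the inequality with a gap of $9234$.
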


Note that $T(18,3,32)$ has $1801$ vertices---far more than $E_7$ and $P_7$. It is tempting to speculate that for each graph $H$ and all sufficiently large $n$, we have $\hom(P_n,H) \le \hom(T_n,H)$ for all $T_n \in {\mathcal T}_n$.  Csikv\'ari and Lin~\cite[Theorem 3.3 and following discussion]{CsikvariLin2014} noted that this speculation is true asymptotically, at least for connected $H$, in the following sense: for each connected $H$ there is a constant $\lambda \ge 0$ (the largest eigenvalue of the adjacency matrix of $H$) such that if $(T_n)_{n \in {\mathbb N}}$ is any sequence of trees with $T_n \in {\mathcal T}_n$ then
\[\lim_{n \rightarrow \infty} \hom(P_n,H)^{1/n} = \lambda \leq \liminf_{n \rightarrow \infty} \hom(T_n,H)^{1/n}. 
\]
So although the path may not always be the minimizer, no infinite family can ``substantially'' beat it.

We show, however, that this speculation is incorrect in an absolute sense, first with a target graph $H$ that is Leontovich at infinitely many $n$ and then with a strongly Leontovich $H$.
In \cref{thm:path_not_eventually_minimizer} below, $E_n$ is the $n$-vertex tree obtained from $P_{n-1}$ by adding a pendant edge to a third-from-last vertex of $P_{n-1}$ (see \cref{fig:gn}) and $\widehat{T}(x,y,z)$ is obtained from $T(x,y,z)$ by adding a loop at the root.

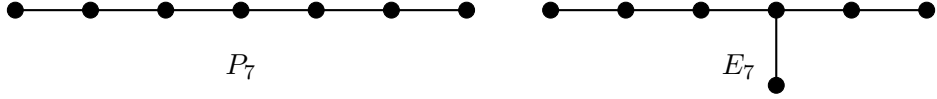
\begin{figure}[ht!]
    \centering
    \begin{tikzpicture}
        \coordinate (v1) at (1,0);
        \coordinate (v2) at (2,0);
        \coordinate (v3) at (3,0);
        \coordinate (v4) at (4,0);
        \coordinate (v5) at (5,0);
        \coordinate (v6) at (6,0);
        \coordinate (v7) at (7,0);
        \draw[fill=black] (v1) circle (3pt);
        \draw[fill=black] (v2) circle (3pt);
        \draw[fill=black] (v3) circle (3pt);
        \draw[fill=black] (v4) circle (3pt);
        \draw[fill=black] (v5) circle (3pt);
        \draw[fill=black] (v6) circle (3pt);
        \draw[fill=black] (v7) circle (3pt);
        \draw[fill=black,thick] (v1)--(v2)--(v3)--(v4)--(v5)--(v6)--(v7);
        \node at (4,-0.75) {$P_7$};
        \coordinate (dummy) at (6,-1);
        \draw[fill=white,white] (dummy) circle (3pt);
    \end{tikzpicture}
    \hspace{0.25in}
    \begin{tikzpicture}
        \coordinate (v1) at (1,0);
        \coordinate (v2) at (2,0);
        \coordinate (v3) at (3,0);
        \coordinate (v4) at (4,0);
        \coordinate (v5) at (5,0);
        \coordinate (v6) at (6,0);
        \coordinate (v7) at (4,-1);
        \draw[fill=black] (v1) circle (3pt);
        \draw[fill=black] (v2) circle (3pt);
        \draw[fill=black] (v3) circle (3pt);
        \draw[fill=black] (v4) circle (3pt);
        \draw[fill=black] (v5) circle (3pt);
        \draw[fill=black] (v6) circle (3pt);
        \draw[fill=black] (v7) circle (3pt);
        \draw[fill=black,thick] (v1)--(v2)--(v3)--(v4)--(v5)--(v6);
        \draw[fill=black,thick] (v4)--(v7);
        \node at (3.5,-0.75) {$E_7$};
    \end{tikzpicture}
    \caption{$P_7$ and $E_7$.} \label{fig:e7-p7}
\end{figure}

\begin{restatable}{theorem}{pathnoteventuallyminimizer}
\label{thm:path_not_eventually_minimizer}
For all sufficiently large odd $n$, we have
\[
\hom(E_n,T(7,1,9)) < \hom(P_n,T(7,1,9)).
\]
Further, for all sufficiently large $n$, odd or even, we have
\[
\hom(E_n,\widehat{T}(400,3,800)) < \hom(P_n,\widehat{T}(400,3,800)).
\]
In other words, $T(7,1,9)$ is Leontovich for all sufficiently large odd $n$ and $\widehat{T}(400,3,800)$ is strongly Leontovich.
\end{restatable}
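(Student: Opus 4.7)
The plan is to exploit the spherical symmetry of $T(x,y,z)$ (and of its loop-at-root variant $\widehat{T}(x,y,z)$) to reduce homomorphism counting to a four-dimensional linear-algebra problem. For any rooted tree $S$, the number of homomorphisms from $S$ into $T(x,y,z)$ sending the root to a vertex $v$ depends only on the level $\ell(v)\in\{0,1,2,3\}$ of $v$. Consequently, the adjacency action is captured on level-constant functions by a $4\times 4$ transfer matrix $\widetilde{A}$, and the weighted inner product $\langle f,g\rangle_n := \sum_\ell n_\ell f_\ell g_\ell$, with $n_\ell$ the number of vertices at level $\ell$, makes $\widetilde{A}$ self-adjoint. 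A direct computation yields the characteristic polynomial $\lambda^4-(x+y+z)\lambda^2+xz$, whose roots $\pm\sqrt{\alpha},\pm\sqrt{\beta}$ give four eigenvectors in closed form. Adding a loop at the root changes only the $(0,0)$ entry of $\widetilde{A}$, producing a self-adjoint transfer matrix whose spectrum is no longer symmetric about $0$.

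The next step is to root both $P_n$ and $E_n$ at the same vertex, namely the third-from-last vertex of the underlying path, so that the two trees share a common ``stem'' and differ only in the short branches attached to this root. This yields
\[
\hom(P_n,H)=\langle \widetilde{A}^{\,n-4}\mathbf{1},\,\widetilde{A}^3\mathbf{1}\rangle_n,\qquad
\hom(E_n,H)=\langle \widetilde{A}^{\,n-4}\mathbf{1},\,(\widetilde{A}^2\mathbf{1})\odot(\widetilde{A}\mathbf{1})\rangle_n,
\]
where $\mathbf{1}$ is the constant-$1$ level function and $\odot$ is the pointwise product. Setting $\phi:=\widetilde{A}^3\mathbf{1}-(\widetilde{A}^2\mathbf{1})\odot(\widetilde{A}\mathbf{1})$, the target inequality $\hom(P_n,H)>\hom(E_n,H)$ becomes $\langle \widetilde{A}^{\,n-4}\mathbf{1},\phi\rangle_n>0$. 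Expanding $\mathbf{1}$ and $\phi$ in an orthogonal eigenbasis of $\widetilde{A}$ rewrites this as $\sum_\lambda \gamma_\lambda\,\lambda^{n-4}$, with each $\gamma_\lambda$ a product of explicit eigenvector-dependent coefficients, and for large $n$ the sign of the sum is controlled by the eigenvalues of largest modulus.

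For $H=T(7,1,9)$, I would compute $\phi=(21,-9,-3,1)$ in the level basis, and then the eigenvalues $\pm\sqrt{\alpha},\pm\sqrt{\beta}$ with $\alpha=(17+\sqrt{37})/2$, $\beta=(17-\sqrt{37})/2$. Because $T(7,1,9)$ is bipartite, the eigenvectors for $\lambda$ and $-\lambda$ differ only by sign flips on the odd-level coordinates, so the coefficients $\gamma_{+\sqrt\alpha}$ and $\gamma_{-\sqrt\alpha}$ combine constructively for one parity of $n-4$ and destructively for the other. For odd $n$, so $n-4$ is odd, the two contributions combine as $(\gamma_{+\sqrt\alpha}-\gamma_{-\sqrt\alpha})(\sqrt\alpha)^{\,n-4}$, and a direct (if unpleasant) calculation shows this coefficient is strictly positive. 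Since $\sqrt\alpha>\sqrt\beta$, the $\pm\sqrt\beta$ contributions are exponentially smaller, so the entire expression is positive for all sufficiently large odd $n$.

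For $H=\widehat{T}(400,3,800)$, the loop at the root breaks the bipartite symmetry, so by the Perron--Frobenius theorem $\widetilde{A}$ has a unique eigenvalue of largest modulus; I would compute this top eigenvalue and its eigenvector (exactly, or to sufficient precision), verify that the corresponding coefficient $\gamma_{\lambda_{\max}}$ is strictly positive, and conclude that $\langle \widetilde{A}^{\,n-4}\mathbf{1},\phi\rangle_n>0$ for all sufficiently large $n$ without parity restriction. The main obstacle throughout is the explicit algebraic verification: the $\gamma_\lambda$ are rational expressions in the (algebraic) eigenvalues and in the coordinates of $\mathbf{1}$, $\phi$, and $\widetilde{A}$, and the real ingenuity lies in choosing the numerical triples $(7,1,9)$ and $(400,3,800)$ so that the dominant coefficient has the right sign. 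Once the sign is verified, bounding the subdominant tails to obtain rigorous statements that hold for \emph{all} sufficiently large $n$ (rather than merely asymptotically) is routine but requires careful tracking of constants.
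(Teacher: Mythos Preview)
Your proposal is correct and follows essentially the same route as the paper. Both arguments reduce homomorphism counting to a $4\times 4$ transfer matrix on the four levels of $T(x,y,z)$, compute the same characteristic polynomial $\lambda^4-(x+y+z)\lambda^2+xz$ (respectively its loop-at-root perturbation), and then compare the coefficients on the eigenvalues of largest modulus.

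The packaging differs slightly. The paper treats $P_n$ and $E_n$ as two separate path-like families with seed $P_4$ rooted at a leaf versus a non-leaf, writes each count as $\mathbf{a}(H)M^n\mathbf{h}(G_0,v_0)$, and then solves for the leading coefficients $c_{\odd},c'_{\odd}$ (respectively $c_1,c'_1$) by evaluating at a few small $n$ and inverting a $2\times 2$ or Vandermonde system; the rigorous sign check is carried out entirely in rational arithmetic via explicit interval bounds on the eigenvalues. Your version roots both trees at the branching vertex, forms the single difference vector $\phi=\widetilde{A}^3\mathbf{1}-(\widetilde{A}^2\mathbf{1})\odot(\widetilde{A}\mathbf{1})$, and expands $\langle\widetilde{A}^{\,n-4}\mathbf{1},\phi\rangle_n$ in an orthogonal eigenbasis for the weighted inner product. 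Your invocation of Perron--Frobenius for $\widehat{T}(400,3,800)$ is a clean way to see that the dominant eigenvalue is simple and strictly dominant (the paper verifies this by direct root isolation instead). What your outline leaves implicit, and where the paper invests most of its effort, is the \emph{rigorous} verification that the dominant coefficient has the claimed sign: the eigenvalues are irrational, so one must either work with sufficiently tight rational enclosures or, as the paper does for the second part, eliminate the subdominant eigenvalues via symmetric-function identities and reduce to checking the sign of an explicit integer-coefficient cubic at a single point.
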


\begin{figure}[ht]
    \centering
    \begin{tikzpicture}
        \coordinate (v1) at (1,0);
        \coordinate (v2) at (2,0);
        \coordinate (v3) at (3,0);
        \coordinate (v4) at (4,0);
        \coordinate (v5) at (5,0);
        \coordinate (v6) at (6,0);
        \coordinate (v7) at (7,0);
        \coordinate (v8) at (8,0);
        \draw[fill=black] (v1) circle (3pt);
        \draw[fill=black] (v2) circle (3pt);
        \draw[fill=black] (v3) circle (3pt);
        \draw[fill=black] (v4) circle (3pt);
        \draw[fill=black] (v5) circle (3pt);
        \draw[fill=black] (v6) circle (3pt);
        \draw[fill=black] (v7) circle (3pt);
        \draw[fill=black] (v8) circle (3pt);
        \draw[fill=black,thick] (v1)--(v2);
        \draw[fill=black,thick] (v3)--(v4)--(v5)--(v6)--(v7)--(v8);
        \draw[fill=black,thick,dashed] (v2) -- (v3);
        \draw [decorate,decoration={brace,amplitude=5pt,raise=10pt}] (1,0) -- (4,0) node[midway,yshift=23pt]{$P_{n}$};
        \node at (4.5,-0.75) {$P_{n+4}$};
        \coordinate (dummy) at (6,-1);
        \draw[fill=white,white] (dummy) circle (3pt);
    \end{tikzpicture}
    \hspace{0.25in}
    \begin{tikzpicture}
        \coordinate (v1) at (1,0);
        \coordinate (v2) at (2,0);
        \coordinate (v3) at (3,0);
        \coordinate (v4) at (4,0);
        \coordinate (v5) at (5,0);
        \coordinate (v6) at (6,0);
        \coordinate (v7) at (5,-1);
        \coordinate (v8) at (7,0);
        \draw[fill=black] (v1) circle (3pt);
        \draw[fill=black] (v2) circle (3pt);
        \draw[fill=black] (v3) circle (3pt);
        \draw[fill=black] (v4) circle (3pt);
        \draw[fill=black] (v5) circle (3pt);
        \draw[fill=black] (v6) circle (3pt);
        \draw[fill=black] (v7) circle (3pt);
        \draw[fill=black] (v8) circle (3pt);
        \draw[fill=black,thick] (v1)--(v2);
        \draw[fill=black,thick] (v3)--(v4)--(v5)--(v6)--(v8);
        \draw[fill=black,thick] (v5)--(v7);
        \draw[fill=black,dashed,thick] (v2)--(v3);
        \draw [decorate,decoration={brace,amplitude=5pt,raise=10pt}] (1,0) -- (4,0) node[midway,yshift=23pt]{$P_{n}$};
        \node at (4,-0.75) {$E_{n+4}$};
    \end{tikzpicture}
    \caption{The graphs $P_{n+4}$ (later referred to as $G_n$) and $E_{n+4}$ (later referred to as $G'_n$).} \label{fig:gn}
\end{figure}
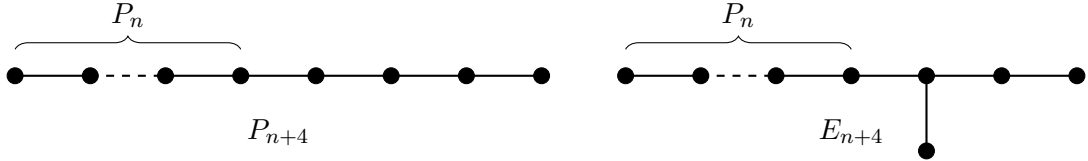

We pause briefly to address what happens for $T(7,1,9)$ for even $n$. It turns out that for all sufficiently large even $n$, we have
\[
\hom(P_n, T(7,1,9)) < \hom(E_n, T(7,1,9)).
\]
Despite extensive computational search, we have not found values $(x,y,z)$ for which the inequality $\hom(E_n, T(x,y,z)) < \hom(P_n, T(x,y,z))$ holds for all sufficiently large $n$. This deficiency is remedied by our second choice of $H$. Note that if one desires $H$ to be a simple graph, then one can replace $\widehat{T}(x,y,z)$ with two copies of $T(x,y,z)$ with an edge joining their roots. The analysis that we will use in \cref{thm:path_not_eventually_minimizer} will go through almost identically, the only difference being that in this case the homomorphism counts are scaled by a factor of $2$.

\medskip

To effectively handle enumeration of homomorphisms to the graphs $T(x,y,z)$, we make use of the \emph{orbit partition} of a graph. This is the partition of the vertex set obtained by declaring a pair of vertices $u, v$ to be equivalent if there is an automorphism that maps $u$ to $v$. While the orbit partition has played an important role in the graph isomorphism problem (see e.g. \cite{CaiFurerImmerman1992, DawarKhan2029}), it has, to present, not been used in the study of graph homomorphism enumeration. The orbit partition and its associated \emph{automorphic similarity matrix} allow us to exploit the symmetry of $T(x,y,z)$. We describe this and our other technical tools in \cref{sec:main_theorem_proofs}, and use these tools to establish two general results, \cref{thm:extension_framework} and \cref{cor:coefficients}, about the enumeration of $H$-colorings. In \cref{sec:extension_applications} we use these results to establish \cref{thm:first_explicit_example} and \cref{thm:path_not_eventually_minimizer}. We close with a collection of open problems in \cref{sec:open_problems}.    

In a companion paper~\cite{GalvinMarmorinoMcMillonNirRedlich2025}, we consider a complementary question: for which target graphs $H$ do we observe that $\hom(P_n,H) \le \hom(T_n,H)$ for all $n$ and all $T_n \in \mathcal{T}_n$? Using the notion of automorphic similarity, we establish a general criterion to approach this problem.      

\section{Automorphic similarity and path-like families} \label{sec:main_theorem_proofs}

In this section we present a strategy for efficiently calculating the number of $H$-colorings of certain families of graphs by using the concept of automorphic similarity, which we now define.

Let $H$ be a target graph, possibly with loops. For vertices $u$ and $v$ in $H$, we say $u$ and $v$ are \emph{automorphically similar} if there is an automorphism of $H$ that sends $u$ to $v$. Automorphic similarity defines an equivalence relation on $V(H)$, and we call the equivalence classes of that relation the \emph{automorphic similarity classes of $H$}. We denote these equivalence classes as $H^1, \ldots, H^k$, where $k$ is the number of automorphism orbits of $V(H)$. The partition $V(H)=\bigcup_i H^i$, sometimes referred to as the \emph{orbit partition}, is an instance of an \emph{equitable partition} of the graph $H$ (see e.g. \cite[Section 9.3]{GodsilRoyle2001}). This is a partition with the property that for any two classes $A, B$ (not necessarily distinct) there is a constant $c(A,B)$ with the property that every vertex in $A$ has $c(A,B)$ neighbors in $B$. For completeness, we present the next lemma, which verifies that $\bigcup_i H^i$ is an equitable partition.    
\begin{lemma}\label{lem:auto_sim_nbrs}
Let $H$ be a graph, possibly with loops, and let $H^1, \ldots, H^k$ be the automorphic similarity classes of $H$. For any $1 \le i \le k$, any $u,v \in H^i$, and any $1 \le j \le k$,
\[ |N(u) \cap H^j| = |N(v) \cap H^j|.\]
\end{lemma}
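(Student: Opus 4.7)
The plan is to exhibit an explicit bijection between $N(u) \cap H^j$ and $N(v) \cap H^j$ using the automorphism guaranteed by the hypothesis that $u$ and $v$ are automorphically similar.

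First I would fix an automorphism $\varphi \colon V(H) \to V(H)$ with $\varphi(u) = v$, which exists because $u, v \in H^i$. The key ingredient is that $\varphi$ permutes the automorphic similarity classes setwise, in fact it fixes each one: for any $w \in H^j$, the element $\varphi(w)$ is automorphically similar to $w$ via $\varphi$ itself, so $\varphi(w) \in H^j$. Applying the same reasoning to $\varphi^{-1}$ shows that $\varphi$ restricts to a bijection of $H^j$ onto itself.

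Next I would use the fact that $\varphi$ preserves adjacency (including, importantly, loops, since $w$ is adjacent to $w$ iff $\varphi(w)$ is adjacent to $\varphi(w)$). Consequently $\varphi$ sends $N(u)$ bijectively to $N(\varphi(u)) = N(v)$. Combining this with the class-preservation from the previous step, the restriction of $\varphi$ to $N(u) \cap H^j$ is an injection into $N(v) \cap H^j$, and the same argument applied to $\varphi^{-1}$ gives an injection in the reverse direction. Thus $|N(u) \cap H^j| = |N(v) \cap H^j|$, as desired.

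There is essentially no obstacle here: the statement is a direct unpacking of the definition of an automorphism together with the definition of the orbit partition. The only point requiring a small amount of care is ensuring the argument is valid when $H$ has loops, but this is automatic because automorphisms by definition preserve the adjacency relation, loops included. I would also remark briefly that this lemma is the standard verification that the orbit partition is an equitable partition, so the result can alternatively be cited from a reference such as Godsil and Royle.
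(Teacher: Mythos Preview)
Your proof is correct and follows essentially the same approach as the paper: fix an automorphism $\varphi$ sending $u$ to $v$, observe that $\varphi$ maps $N(u)\cap H^j$ into $N(v)\cap H^j$ because automorphisms preserve both adjacency and orbit classes, and use $\varphi^{-1}$ for the reverse inequality. Your additional remarks about loops and the equitable-partition interpretation are accurate but not needed for the argument.
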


\begin{proof}
Let $\varphi$ be an automorphism of $V(H)$ that sends $u$ to $v$. Then $\varphi$ maps each $w \in N(u) \cap H^j$ to some $w'$. As $w \sim u$, we see $w' \sim v$, and as $w \in H^j$ and $\varphi(w) = w'$ we see $w$ and $w'$ are automorphically similar, so $w' \in H^j$ as well. We have shown $|N(u) \cap H^j| \le |N(v) \cap H^j|$. Repeating the analysis with $\varphi^{-1}$, we see $|N(u) \cap H^j| \ge |N(v) \cap H^j|$ and so equality holds.
\end{proof}

\cref{lem:auto_sim_nbrs} permits us to define the numbers $m_{i,j}$ to be the number of neighbors any $v \in H^i$ has in $H^j$. The \emph{automorphic similarity matrix} of $H$ (relative to the particular order put on the automorphic similarity classes) is the $k \times k$ matrix $M = M(H)$ whose $(i,j)$-entry is $m_{i,j}$. This matrix is also referred to as the \emph{quotient matrix} of the adjacency matrix of $H$ relative to the equitable partition $\bigcup_i H^i$. 

When $H$ is highly symmetric, the automorphic similarity matrix of $H$ is much smaller than the adjacency matrix and thus easier to analyze. When $T$ is a tree, the automorphic similarity matrix, together with the sizes of the automorphic similarity classes, contains enough information to compute $\hom(T, H)$. This general fact is proved in \cite{GalvinMarmorinoMcMillonNirRedlich2025}, but is not needed in this paper. Here we work only with families of path-like trees (to be defined shortly), and \cref{thm:extension_framework} below gives us all that we need in terms of counting $H$-colorings of these trees.  

For a graph $G$ with designated vertex $v$, let $\Homclass{v}{i}{G}{H}$ denote the set of $H$-colorings of $G$ in which $v$ is mapped to some (arbitrary but specific) representative of $H^i$. We use $\homclass{v}{i}{G}{H}$ for the number of such $H$-colorings, and we define $\mathbf{h}(G,v)$ to be the column vector whose \ith{i} entry is $\homclass{v}{i}{G}{H}$.

Although $\Homclass{v}{i}{G}{H}$ clearly depends on the choice of representative, we show in the following lemma that $\homclass{v}{i}{G}{H}$ does not. This allows us to extract crucial information from the automorphic similarity matrix.

\begin{lemma} \label{lem:using_iso}
Let $H$ be a graph, possibly with loops, and let $G$ be an arbitrary graph with distinguished vertex $v$. Suppose $w$ and $w'$ are automorphically similar vertices in $H$. Let ${\mathcal G}$ be the set of $H$-colorings of $G$ in which $v$ is sent to $w$, and let ${\mathcal G}'$ be the set of $H$-colorings of $G$ in which $v$ is sent to $w'$. Then $|{\mathcal G}|=|{\mathcal G}'|$.
\end{lemma}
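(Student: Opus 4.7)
The plan is to construct an explicit bijection between $\mathcal{G}$ and $\mathcal{G}'$ by composing $H$-colorings with an automorphism of $H$. Because $w$ and $w'$ are automorphically similar, by definition there exists an automorphism $\varphi$ of $H$ with $\varphi(w)=w'$; this $\varphi$ will be the engine of the bijection.

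First I would define the candidate bijection $\Phi : \mathcal{G} \to \mathcal{G}'$ by $\Phi(f) = \varphi \circ f$. Two things need to be checked. First, that $\Phi(f)$ is actually an $H$-coloring of $G$: if $x \sim_G y$, then $f(x) \sim_H f(y)$ since $f$ is an $H$-coloring, and then $\varphi(f(x)) \sim_H \varphi(f(y))$ because $\varphi$ is an automorphism of $H$ (and hence preserves both edges and loops). Second, that $\Phi(f) \in \mathcal{G}'$, which is immediate from $\Phi(f)(v) = \varphi(f(v)) = \varphi(w) = w'$.

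Next I would verify that $\Phi$ is a bijection by exhibiting its inverse. The automorphism $\varphi$ has an inverse automorphism $\varphi^{-1}$ satisfying $\varphi^{-1}(w')=w$, and the same construction applied to $\varphi^{-1}$ yields a map $\Psi : \mathcal{G}' \to \mathcal{G}$ given by $\Psi(g) = \varphi^{-1} \circ g$ (well-defined by the same reasoning as above). Then $\Psi \circ \Phi$ and $\Phi \circ \Psi$ are both the identity by associativity of composition and the fact that $\varphi^{-1}\circ\varphi$ and $\varphi\circ\varphi^{-1}$ are the identity on $V(H)$. Hence $|\mathcal{G}| = |\mathcal{G}'|$.

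I do not anticipate a genuine obstacle here; the entire content of the lemma is packaged into the observation that post-composition with an automorphism of the codomain sends homomorphisms to homomorphisms and respects the image of the distinguished vertex $v$. The only mild subtlety worth making explicit in the write-up is that $H$ may carry loops, but this causes no trouble because automorphisms of graphs with loops preserve looped vertices (since they preserve adjacency, including self-adjacency), so the edge-preservation step $f(x) \sim_H f(y) \implies \varphi(f(x)) \sim_H \varphi(f(y))$ remains valid in this setting.
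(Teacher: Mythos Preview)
Your proof is correct and follows essentially the same approach as the paper: construct a bijection between $\mathcal{G}$ and $\mathcal{G}'$ by composing each $H$-coloring with an automorphism $\varphi$ of $H$ sending $w$ to $w'$. Your write-up is in fact more careful than the paper's terse version, which writes the bijection as $f \mapsto f \circ \varphi$ when (as you correctly have it) it should be $f \mapsto \varphi \circ f$.
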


\begin{proof}
Let $\varphi$ be an automorphism of $H$ that sends $w$ to $w'$, which exists as $w$ and $w'$ are automorphically similar. Then the function mapping $f$ to $f \circ \varphi$ is a bijection mapping ${\mathcal G}$ to ${\mathcal G}'$. 
\end{proof}

We now define the families of graphs that will be our main concern, and then state and prove our main technical theorem. Let $G_0$ be a graph (not necessarily a tree, although in each of our applications $G_0$ will be a tree) and fix a vertex $v_0 \in G_0$. For $n \ge 0$, define $G_n$ to be the graph formed by appending a path with $n$ new vertices to $v_0$. We refer to the family of graphs $\{G_n\}_{n=0}^\infty$ as a \emph{path-like family} of graphs, and we call the initial graph $G_0$ the \emph{seed graph} of the family. Note that by taking $G_0$ to be a single vertex, we see the set of all paths form a path-like family.

\begin{restatable}{theorem}{extensions} \label{thm:extension_framework}
Let $H$ be a target graph, possibly with loops, and let $H^1, \ldots, H^k$ be the automorphic similarity classes of $H$, with $M$ the associated automorphic similarity matrix. Denote by $\mathbf{a}(H)$ the row vector whose \ith{i} entry is $|H^i|$. Let $G_0$ be a graph, fix a vertex $v_0 \in G_0$, and let $\{G_n\}_{n=0}^\infty$ be the path-like family with seed graph $G_0$ and fixed vertex $v_0$.
Then
\[ \hom(G_n,H) = \mathbf{a}(H) M^n \mathbf{h}(G_0,v_0).\]
\end{restatable}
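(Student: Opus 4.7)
The plan is to prove the identity by induction on $n$, with the key step being a transfer-matrix style recursion that expresses $\mathbf{h}(G_{n+1}, v_{n+1})$ in terms of $\mathbf{h}(G_n, v_n)$, where $v_{n}$ denotes the endpoint of the path appended to $v_0$ in $G_n$ (so $v_0$ agrees with the given seed vertex and $v_{n+1}$ is the new leaf added when passing from $G_n$ to $G_{n+1}$). The claim to establish by induction is the stronger statement $\mathbf{h}(G_n,v_n) = M^n \mathbf{h}(G_0,v_0)$; the theorem's identity then falls out by a one-line summation over automorphic similarity classes at the end.

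For the base case $n=0$, both sides are $\mathbf{h}(G_0,v_0)$ by definition. For the inductive step, fix representatives $w_1,\ldots,w_k$ of $H^1,\ldots,H^k$. Any $H$-coloring of $G_{n+1}$ sending $v_{n+1}$ to $w_j$ is determined by choosing an image $u \in N(w_j)$ for $v_n$ and then extending to an $H$-coloring of $G_n$ with $v_n \mapsto u$. Partitioning $N(w_j)$ by automorphic similarity class and applying \cref{lem:using_iso} to replace the count of $H$-colorings of $G_n$ with $v_n\mapsto u$ by $\homclass{v_n}{i}{G_n}{H}$ whenever $u \in H^i$, we obtain
\[
\homclass{v_{n+1}}{j}{G_{n+1}}{H} \;=\; \sum_{i=1}^k |N(w_j)\cap H^i|\cdot \homclass{v_n}{i}{G_n}{H} \;=\; \sum_{i=1}^k m_{j,i}\,\homclass{v_n}{i}{G_n}{H},
\]
where the second equality uses the definition of $m_{j,i}$ together with \cref{lem:auto_sim_nbrs} to know that this count is well-defined independently of the representative $w_j$. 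This is precisely the $j$th entry of $M\,\mathbf{h}(G_n,v_n)$, so $\mathbf{h}(G_{n+1},v_{n+1}) = M\,\mathbf{h}(G_n,v_n)$, and the induction closes.

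To finish, I partition all $H$-colorings of $G_n$ by the automorphic similarity class containing the image of $v_n$. Since \cref{lem:using_iso} guarantees that the number of $H$-colorings of $G_n$ sending $v_n$ to a given $w \in H^i$ depends only on $i$, we get
\[
\hom(G_n,H) \;=\; \sum_{i=1}^k |H^i|\,\homclass{v_n}{i}{G_n}{H} \;=\; \mathbf{a}(H)\,\mathbf{h}(G_n,v_n) \;=\; \mathbf{a}(H)\,M^n\,\mathbf{h}(G_0,v_0).
\]

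The main subtlety, rather than any real obstacle, is keeping the two independence-of-representative arguments straight: \cref{lem:auto_sim_nbrs} is needed so that $m_{j,i}$ is well-defined (equivalently, so that the recursion for the entry of $M\,\mathbf{h}(G_n,v_n)$ does not depend on our choice of $w_j$), and \cref{lem:using_iso} is needed so that $\homclass{v_n}{i}{G_n}{H}$ is a well-defined integer, so that we may pull it out of the sum over $u \in N(w_j)\cap H^i$ in the inductive step, and so that we may replace $\sum_{w\in H^i}$ by $|H^i|$ in the final summation. Once these are invoked in the right places, the argument reduces to a one-step transfer-matrix recursion and its induction.
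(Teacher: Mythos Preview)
Your proof is correct and follows essentially the same approach as the paper: both establish the recursion $\mathbf{h}(G_n,v_n) = M\,\mathbf{h}(G_{n-1},v_{n-1})$ by partitioning on the image of the penultimate path vertex, iterate to obtain $\mathbf{h}(G_n,v_n) = M^n\,\mathbf{h}(G_0,v_0)$, and then sum over classes to get $\hom(G_n,H) = \mathbf{a}(H)\,\mathbf{h}(G_n,v_n)$. Your write-up is slightly more explicit about where \cref{lem:auto_sim_nbrs} and \cref{lem:using_iso} are each invoked, but the argument is the same.
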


\begin{proof}
For $n \ge 1$, let $v_n$ be the unique vertex in $V(G_n) \setminus V(G_{n-1})$, the leaf at the end of the path extending from $v_0$. By \cref{lem:using_iso}, $\homclass{v_n}{i}{G_n}{H}$ is independent of the choice of representative of $H^i$. By partitioning the $H$-colorings of $G_n$ by the $H^i$ to which we send $v_n$, we can write
\begin{equation} \label{eq:first_recursion_step}
\hom(G_n,H) = \sum_{i=1}^k |H^i| \homclass{v_n}{i}{G_n}{H} = \mathbf{a}(H) \mathbf{h}(G_n,v_n).
\end{equation}
Note that equation \eqref{eq:first_recursion_step} is valid for all $n \ge 0$, and at $n=0$ it gives us our claimed expression for $\hom(G_0,H)$.

For $n \ge 1$, we can further partition the $H$-colorings of $G_n$ in which $v_n$ is sent to $H^i$ by the automorphic similarity class to which $v_{n-1}$ is sent, where $v_{n-1}$ is the unique neighbor of $v_n$ in $G_n$. More specifically, we can map $v_{n-1}$ to one of $m_{i,j}$ vertices in $H^j$ and then color $G_{n-1}$ in $\homclass{v_{n-1}}{j}{G_{n-1}}{H}$ ways. Summing over all choices of $j$, we see
\[ \homclass{v_n}{i}{G_n}{H} = \sum_{j=1}^k m_{i,j} \homclass{v_{n-1}}{j}{G_{n-1}}{H} \]
or $\mathbf{h}(G_n,v_n) = M \mathbf{h}(G_{n-1},v_{n-1})$
Applying this idea recursively gives $\mathbf{h}(G_n,v_n) = M^n \mathbf{h}(G_0,v_0)$. Plugging this equation into equation \eqref{eq:first_recursion_step} gives
\begin{equation} \label{eq:second_recursion_step}
\hom(G_n,H) = \mathbf{a}(H) M^n \mathbf{h}(G_0,v_0),
\end{equation}
as claimed.
\end{proof}

One consequence of \cref{thm:extension_framework} is that if $G_n$ is a member of a path-like family then the number of $H$-colorings of $G_n$ can be written as a linear combination of \ith{n} powers of the eigenvalues of $M$, the automorphic similarity matrix of $H$. The details are given in \cref{cor:coefficients}, which we now state and prove.

\begin{restatable}{corollary}{coefficients} \label{cor:coefficients}
Let $H$ be a target graph, possibly with loops. Let $G_0$ be a graph, fix a vertex $v_0 \in G_0$, and let $\{G_n\}_{n=0}^\infty$ be the path-like family with seed graph $G_0$ and fixed vertex $v_0$. There are real numbers $\lambda_1, \ldots, \lambda_k$ dependent only on $H$, and real numbers $c_1, \ldots, c_k$ dependent on $G_0$, $v_0$, and $H$ such that
\[ \hom(G_n,H) = \sum_{i=1}^k c_i\lambda_i^n.\]
\end{restatable}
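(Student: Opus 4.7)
The plan is to reduce the claim to a diagonalization question about the automorphic similarity matrix $M$ and then invoke \cref{thm:extension_framework} directly. By that theorem,
\[
\hom(G_n,H) = \mathbf{a}(H) M^n \mathbf{h}(G_0,v_0),
\]
so it suffices to write $M^n = P\Lambda^n P^{-1}$ for a real diagonal matrix $\Lambda$ whose entries depend only on $H$. Once this is done, distributing the product yields exactly an expression of the form $\sum_{i=1}^k c_i \lambda_i^n$.

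The key step is to verify that $M$ is diagonalizable over $\mathbb{R}$ with real spectrum, despite not being symmetric in general. I would argue this by exhibiting a symmetric conjugate. Let $S$ be the $|V(H)| \times k$ characteristic matrix of the orbit partition (so $S_{v,i} = 1$ if $v \in H^i$ and $0$ otherwise), and set $D = \diag(|H^1|,\ldots,|H^k|) = S^T S$. The equitable-partition property (\cref{lem:auto_sim_nbrs}) is equivalent to $A_H S = S M$, where $A_H$ is the adjacency matrix of $H$; solving for $M$ gives $M = D^{-1} S^T A_H S$. Conjugating by $D^{1/2}$ produces
\[
D^{1/2} M D^{-1/2} = (S D^{-1/2})^T A_H (S D^{-1/2}),
\]
which is a real symmetric matrix because $A_H$ is. Hence $M$ is similar to a real symmetric matrix, so it is diagonalizable over $\mathbb{R}$ with $k$ real eigenvalues $\lambda_1,\ldots,\lambda_k$ (listed with multiplicity) depending only on $H$.

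Writing $M = P\Lambda P^{-1}$ with $\Lambda = \diag(\lambda_1,\ldots,\lambda_k)$ and $P$ real (obtained by pulling back an orthogonal diagonalizer of the symmetric conjugate through $D^{-1/2}$), one has $M^n = P\Lambda^n P^{-1}$ for every $n \ge 0$. Substituting into \cref{thm:extension_framework} yields
\[
\hom(G_n,H) = \mathbf{a}(H) P \Lambda^n P^{-1} \mathbf{h}(G_0,v_0) = \sum_{i=1}^k c_i \lambda_i^n,
\]
where $c_i = (\mathbf{a}(H) P)_i \cdot (P^{-1} \mathbf{h}(G_0,v_0))_i$. Since $P$ and $\mathbf{a}(H)$ are determined entirely by $H$ while $\mathbf{h}(G_0,v_0)$ depends on $G_0$, $v_0$, and $H$, the coefficients $c_i$ have exactly the stated dependencies, and all quantities are real.

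The main obstacle is the diagonalizability step: because $M$ is not symmetric in general, diagonalizability over $\mathbb{R}$ is not automatic, and without it the best one could extract from Cayley--Hamilton or the Jordan form is the strictly weaker expression $\sum_i p_i(n) \lambda_i^n$ with $p_i$ polynomials. The trick of conjugating by $D^{1/2}$ to expose a symmetric conjugate is a standard move for quotient matrices of equitable partitions, and it is precisely what forces the clean exponential form claimed in the corollary.
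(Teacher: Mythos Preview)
Your proposal is correct and follows essentially the same route as the paper: invoke \cref{thm:extension_framework}, show $M$ is diagonalizable over $\mathbb{R}$ by conjugating with $D^{1/2}=\diag(\sqrt{|H^i|})$ to obtain a real symmetric matrix, and then read off the exponential-sum form. The only cosmetic difference is in how symmetry of $D^{1/2}MD^{-1/2}$ is verified---the paper computes entries directly and uses the double-counting identity $|H^i|\,m_{i,j}=|H^j|\,m_{j,i}$, whereas you pass through the characteristic matrix $S$ and the factorization $M=D^{-1}S^TA_HS$; these are two packagings of the same fact and yield the same conjugating matrix.
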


\begin{proof}
The automorphic similarity matrix $M$ is not necessarily symmetric, but it is similar to a symmetric matrix. Indeed, let $B={\rm diag}(\sqrt{h_1}, \sqrt{h_2}, \ldots, \sqrt{h_k})$ where $h_i=|H^i|$. The $(i,j)$-entry of $BMB^{-1}$ is $\sqrt{h_i}\, m_{i,j}/\sqrt{h_j}$, so symmetry is implied by the equality 
\[
\frac{\sqrt{h_i}\, m_{i,j}}{\sqrt{h_j}} = \frac{\sqrt{h_j} \, m_{j,i}}{\sqrt{h_i}}
\] 
or equivalently $h_im_{i,j}=h_jm_{j,i}$. This equality holds because both sides count the number of edges between $H^i$ and $H^j$. Since $BMB^{-1}$ is real and symmetric, it is diagonalizable, and so $M$ is diagonalizable, say via $M=ADA^{-1}$ with $D=\diag(\lambda_1, \ldots, \lambda_k)$, all $\lambda_i$ real. Applying this identity to the count given in \cref{thm:extension_framework} gives
\[\hom(G_n,H) = \mathbf{a}(H) A D^n A^{-1} \mathbf{h}(G_0,v_0).\]
The right-hand side can be written as a linear combination of the $\lambda_i^n$. The coefficient $c_i$ of $\lambda_i^n$ is determined by $\mathbf{a}(H)$ and $A$, which depends on $H$, and by $\mathbf{h}(G_0,v_0)$, which depends on $G_0$, $v_0$, and $H$, so the result follows.
\end{proof}
 
By diagonalizing $M$, we have expressed the number of $H$-colorings of path-like families as a sum of a linear (in $k$) number of terms, each of which is exponential in $n$. When $n$ is sufficiently large, that sum is dominated by the largest (in magnitude) eigenvalues whose coefficients are non-zero. This allows for straightforward comparisons of the asymptotic behavior of $H$-colorings of path-like different families.

\section{Extending the observations of Leontovich, Csikv\'ari, and Lin} \label{sec:extension_applications}

Recall from the introduction that there is no analog of \cref{thm:siderenko} for the left-hand side of inequality \eqref{inq:PT}. That is, there are target graphs $H$ and values $n$ for which the $n$-vertex path does \emph{not} have the minimum number of $H$-colorings among $n$ vertex trees: the graphs that are Leontovich at $n$. Leontovich~\cite{Leontovich1989} and later Csikv\'{a}ri and Lin~\cite{CsikvariLin2014} observed that there exist target graphs that are Leontovich at $n=7$. In this section we first make this observation concrete, then significantly extend it.

Throughout this section, let $G_0 = P_4$ be the seed graph of the following two path-like families of graphs: $\{G_n\}_{n=0}^\infty$, which is built by appending a path of length $n$ to a leaf of $G_0$, and $\{G_n'\}_{n=0}^\infty$, which is built by appending a path of length $n$ to a non-leaf of $G_0$. (See \cref{fig:gn}.) Note that $G_3 = P_7$ and $G_3' = E_7$, and more generally, $G_i = P_{4+i}$ and $G_i' = E_{4+i}$, and so these families are natural extensions of the graphs given in \cref{fig:e7-p7}.

We begin with \cref{thm:first_explicit_example}, which we restate for convenience.

\explicitexample*

\begin{proof}
We use \cref{thm:extension_framework} to count $\hom(E_7,T(18,3,32))$ and $\hom(P_7,T(18,3,32))$.
Note that throughout this proof we use $H$ and $T(x,y,z)$ interchangeably; in particular, $H$ does not represent a general target graph. It is straightforward to confirm that $H$ has four automorphic similarity classes corresponding to the root and three generations of children in $H$. Let $H^i$ be the collection of vertices at distance $i-1$ from the root. This gives
\begin{equation} \label{eq:a-for-T(x,y,z)}
\mathbf{a}(T(x,y,z)) = \bigl[|H^i|\bigr]_{i=1}^4 = \begin{bmatrix} 1 & x & xy & xyz \end{bmatrix}
\end{equation}
with the corresponding automorphic similarity matrix
\begin{equation} \label{eq:M-for-T(x,y,z)}
M =
\begin{bmatrix}
    0 & x & 0 & 0 \\
    1 & 0 & y & 0 \\
    0 & 1 & 0 & z \\
    0 & 0 & 1 & 0
\end{bmatrix}.
\end{equation}

Recall our definitions of $\homclass{v}{i}{T}{H}$, the number of $H$-colorings of $T$ where a designated vertex $v$ is sent to an arbitrary but specific element of $H^i$, and $\mathbf{h}(T,v)$, the $k$-dimensional column vector whose \ith{i} entry is $\homclass{v}{i}{T}{H}$. For a given $T$ and $v$, we can calculate $\mathbf{h}(T,v)$ recursively: if $T = K_1$ is a single vertex, then $\mathbf{h}(T,v) = \begin{bmatrix} 1 & 1 & 1 & 1\end{bmatrix}^\trans$. Otherwise, let $u_1, \ldots, u_d$ be the neighbors of $v$ and define $S_i$ to be the component of $T-v$ containing $u_i$. Then
\[
\mathbf{h}(T,v) =
\begin{bmatrix}
\displaystyle \prod_{i=1}^d x\homclass{u_i}{2}{S_i}{H}\\
\displaystyle\prod_{i=1}^d (\homclass{u_i}{1}{S_i}{H} + y\homclass{u_i}{3}{S_i}{H})\\
\displaystyle\prod_{i=1}^d (\homclass{u_i}{2}{S_i}{H} + z\homclass{u_i}{4}{S_i}{H})\\
\displaystyle\prod_{i=1}^d \homclass{u_i}{3}{S_i}{H}
\end{bmatrix}.
\]
For example, the second entry is calculated by noting that if an $H$-coloring of $T$ maps $v$ to $H^2$, each neighbor of $v$ can be mapped either to the root (in one way) or to a child of the target of $v$ (in $y$ ways), with the choices made independently. The other entries are computed similarly.

Applying this recursive method to $G_0 = P_4$ where $u_0$ is a leaf and $v_0$ is a non-leaf gives
\[ 
\mathbf{h}(G_0,u_0) = 
\begin{bmatrix}
x(x + y (1 + z))\\
x(1 + y) + y(1 + y + z)\\
x + y(1 + z)+ z(1 + z)\\
1 + y + z
\end{bmatrix}
\quad
\text{and}
\quad
\mathbf{h}(G_0,v_0) = 
\begin{bmatrix}
x(1 + y)x\\
(x + y(1 + z))(1 + y)\\
(1+ y + z)(1 + z)\\
1 + z
\end{bmatrix}.\]

We now apply \cref{thm:extension_framework}, setting $x=18, y=3$, and $z=32$, to calculate
\[ \hom(P_7,T(18,3,32)) = \mathbf{a}(T(18,3,32)) M^3 \mathbf{h}(G_0,u_0) = 81558090\]
and
\[ \hom(E_7,T(18,3,32)) = \mathbf{a}(T(18,3,32)) M^3 \mathbf{h}(G_0,v_0) = 81548856,\]
from which it follows that $\hom(E_7,T(18,3,32)) < \hom(P_7,T(18,3,32))$.
\end{proof}

We now turn to our second main result, \cref{thm:path_not_eventually_minimizer}, which we also restate for convenience. 

\pathnoteventuallyminimizer*

\begin{proof}
We use \cref{cor:coefficients} to compare the behavior of $\hom(E_n,T(7,1,9))$ and $\hom(P_n,T(7,1,9))$ when $n$ is large and odd, and also to compare $\hom(E_n,\widehat{T}(400,3,800))$ and $\hom(P_n,\widehat{T}(400,3,800))$ for large $n$ of arbitrary parity. 

We begin with $T(7,1,9)$. As in the proof of \cref{thm:first_explicit_example}, we use $H$ to denote $T(x,y,z)$. We have already calculated $\mathbf{a}(H)$ and the automorphic similarity matrix $M$, in equations \eqref{eq:a-for-T(x,y,z)} and \eqref{eq:M-for-T(x,y,z)}. The characteristic polynomial of $M$ is
\[
\det(M-tI) = t^4 - (x+y+z)t^2 +xz.
\]
For $x, y, z > 0$ this polynomial has distinct roots, specifically 
\[
\lambda = \sqrt{\frac{x+y+z + \sqrt{(x+y+z)^2 - 4xz}}{2}} \qquad \text{and} \qquad \mu = \sqrt{\frac{x+y+z - \sqrt{(x+y+z)^2 - 4xz}}{2}},
\]
as well as $-\lambda$ and $-\mu$. By \cref{cor:coefficients}, there are constants $c_1, c_2, d_1, d_2$ such that
\begin{eqnarray} \label{eq:path_expanded}
\hom(P_{n+4},T(x,y,z)) &=& c_1 \lambda^n + d_1 \mu^n + c_2(-\lambda)^n + d_2 (-\mu)^n \notag\\
    &=& (c_1+(-1)^nc_2)\lambda^n + (d_1+(-1)^n d_2)\mu^n
\end{eqnarray}
and constants $c'_1, c'_2, d'_1, d'_2$ such that
\begin{eqnarray} \label{eq:en_expanded} 
\hom(E_{n+4},T(x,y,z)) &=& c'_1 \lambda^n + d'_1 \mu^n + c'_2(-\lambda)^n + d'_2 (-\mu)^n
 \notag\\
    &=& (c'_1+(-1)^nc'_2)\lambda^n + (d'_1+(-1)^n d'_2)\mu^n.
\end{eqnarray}

Define $c_{\odd} = c_1 - c_2$ and $d_{\odd}, c'_{\odd}$, and $d'_{\odd}$ analogously so that for odd $n$,
\begin{equation} \label{eq:compact_exponents}
    \hom(P_{n+4},T(x,y,z)) = c_{\odd} \lambda^n + d_{\odd} \mu^n \qquad \text{and} \qquad \hom(E_{n+4},T(x,y,z)) = c'_{\odd} \lambda^n + d'_{\odd} \mu^n.
\end{equation}
Because $y > 0$,
\[ (x+y+z)^2 -4xz > (x+z)^2-4xz = (x-z)^2 \ge 0, \]
so $\lambda > \mu$. Thus if $c_{\odd} > c'_{\odd}$, then $\hom(P_{n+4},T(x,y,z)) > \hom(E_{n+4},T(x,y,z))$ for all odd $n$ that are sufficiently large.

We could, in principle, explicitly compute the similarity matrix $A=A(x,y,z)$ that diagonalizes $M$ and so explicitly compute each of the constants mentioned above as functions of $x, y$, and $z$. The expressions involved, while algebraic, turn out to be challenging to work with. For example, the $(1,1)$-entry of the similarity matrix computed using \mathematica~is
\[
\frac{\sqrt{x + y + z - \sqrt{x^2 + 2x(y - z) + (y + z)^2}}
\;\left(-x + y + z + \sqrt{x^2 + 2x(y - z) + (y + z)^2}\right)}
{2\sqrt{2}}
\]
and the $(1,1)$-entry of its inverse is
\[
\frac{1}{\sqrt{2}\,\sqrt{x^2 + 2x(y - z) + (y + z)^2}\,
\sqrt{x + y + z - \sqrt{x^2 + 2x(y - z) + (y + z)^2}}}.
\] 

We implement an alternate approach that avoids the challenge of working with algebraic but irrational expressions. Applying equation \eqref{eq:path_expanded} at $n=1$ and $n=3$, we obtain the equalities
\[
\hom(P_5,T(x,y,z)) = c_{\odd}\lambda + d_{\odd}\mu \quad\text{and}\quad\hom(P_7,T(x,y,z)) = c_{\odd}\lambda^3 + d_{\odd}\mu^3
\]
or
\[
\begin{bmatrix}
\hom(P_5,T(x,y,z)) \\
\hom(P_7,T(x,y,z))
\end{bmatrix}
= 
\begin{bmatrix}
\lambda & \mu \\
\lambda^3 & \mu^3
\end{bmatrix}
\begin{bmatrix}
c_{\odd} \\
d_{\odd}
\end{bmatrix}.
\]
By inverting the $2 \times 2$ matrix, noting $\lambda > \mu > 0$, we get 
\[
c_{\odd} = \frac{\hom(P_7,T(x,y,z))-\mu^2\hom(P_5,T(x,y,z))}{\lambda(\lambda^2-\mu^2)}
\]
and, similarly, using equation \eqref{eq:en_expanded} at $n=1$ and $n=3$,
\[
c'_{\odd} = \frac{\hom(E_7,T(x,y,z))-\mu^2\hom(E_5,T(x,y,z))}{\lambda(\lambda^2-\mu^2)}.
\]
We can compute the terms $\hom(P_{n+4},T(x,y,z))$ and $\hom(E_{n+4},T(x,y,z))$ at $n=1,3$ using \cref{thm:extension_framework}, which tells us
that
\[
\hom(P_{n+4},T(x,y,z)) = 
\begin{bmatrix} 1&x&xy&xyz \end{bmatrix}
\begin{bmatrix} 0&x&0&0\\1&0&y&0\\0&1&0&z\\0&0&1&0\end{bmatrix}^n 
\begin{bmatrix}
x(x + y (1 + z))\\
x(1 + y) + y(1 + y + z)\\
x + y(1 + z)+ z(1 + z)\\
1 + y + z
\end{bmatrix}
\]
and
\[
\hom(E_{n+4},T(x,y,z)) = 
\begin{bmatrix} 1&x&xy&xyz \end{bmatrix}
\begin{bmatrix} 0&x&0&0\\1&0&y&0\\0&1&0&z\\0&0&1&0\end{bmatrix}^n 
\begin{bmatrix}
x(1 + y)x\\
(x + y(1 + z))(1 + y)\\
(1+ y + z)(1 + z)\\
1 + z
\end{bmatrix}.
\]
Recalling that $\pm \lambda$ and $\pm \mu$ are the roots of the equation $f(t):=t^4-(x+y+z)t^2+xz=0$ (with $\lambda > \mu > 0$), we can rigorously bound 
\[
\mu_\ell < \mu < \mu_u \quad\text{and}\quad \lambda_\ell < \lambda <\lambda_u
\]
by finding rationals $0 < \mu_\ell < \mu_u < \lambda_\ell < \lambda_u$ satisfying $f(\mu_\ell) > 0 > f(\mu_u)$ and $f(\lambda_\ell) < 0 < f(\mu_u)$. We can then get rational bounds on $c_{\odd}$ and $c'_{\odd}$:
\begin{equation} \label{inq-coddlb}
c_{\odd} > \frac{\hom(P_7,T(x,y,z))-\mu_u^2\hom(P_5,T(x,y,z))}{\lambda_u(\lambda_u^2-\mu_\ell^2)}
\end{equation}
and
\begin{equation} \label{inq-c'oddub}
c'_{\odd} < \frac{\hom(E_7,T(x,y,z))-\mu_\ell^2\hom(E_5,T(x,y,z))}{\lambda_\ell(\lambda_\ell^2-\mu_u^2)}.
\end{equation}
If it is indeed the case that $c_{\odd} > c'_{\odd}$, we can prove it by finding suitable rational approximations of $\lambda$ and $\mu$ via inequalities \eqref{inq-coddlb} and \eqref{inq-c'oddub}.    

In the specific case $(x,y,z)=(7,1,9)$, we find
\[
\hom(P_5,T(7,1,9))=9366,~~\hom(P_7,T(7,1,9))=106302
\]
and
\[
\hom(E_5,T(7,1,9))=9492,~~\hom(E_7,T(7,1,9))=106932,
\]
and that appropriate choices for the rational approximations to $\mu$ and $\lambda$ are  
\[
\mu_\ell = 2.3363, ~\mu_u = 2.3364,~\lambda_\ell = 3.3972 \quad\text{and}\quad \lambda_u = 3.3973.
\]
Inserting these values into inequalities \eqref{inq-coddlb} and \eqref{inq-c'oddub} we obtain,
\[
c_{\odd} > \frac{229896697436000}{86112348317} > 2669 > 2668 > \frac{287092907950625}{107616977208} > c'_{\odd}.
\]
It follows that for all sufficiently large odd $n$, $P_n$ admits more $T(7,1,9)$-colorings than $E_n$ does, establishing the first claim of the theorem.

As mentioned in the introduction, we have been unable to extend this analysis to even $n$. We can use the constants in equations \eqref{eq:path_expanded} and \eqref{eq:en_expanded} to define $c_{\even} = c_1 + c_2$ as well as $d_{\even}, c'_{\even}$, and $d'_{\even}$, giving us the equivalent of equation \eqref{eq:compact_exponents} for even $n$:
\[ \hom(P_{n+4},T(x,y,z)) = c_{\even} \lambda^n + d_{\even} \mu^n \quad \text{and} \quad \hom(E_{n+4},T(x,y,z)) = c'_{\even} \lambda^n + d'_{\even} \mu^n.
\]
At $(x,y,z)=(7,1,9)$, we find $c_{\even} < c'_{\even}$, meaning that for all sufficiently large even $n$ we have 
\[
\hom(P_n, T(7,1,9)) < \hom(E_n, T(7,1,9)).
\]
Extensive computer search has not produced values $(x,y,z)$ for which $c_{\even} > c'_{\even}$.

To tackle the second part of the theorem, set $\change{H} =\widehat{T}(x,y,z)$, the graph obtained from $T(x,y,z)$ by adding a loop at the root. We have $\mathbf{a}(\change{H}) = \begin{bmatrix}1 & x & xy & xyz \end{bmatrix}$, the same as $\mathbf{a}(H)$, but now the automorphic similarity matrix becomes 
\[ 
\change{M} = \begin{bmatrix}
    1 & x & 0 & 0 \\
    1 & 0 & y & 0 \\
    0 & 1 & 0 & z \\
    0 & 0 & 1 & 0
    \end{bmatrix},
\]
with the $1$ in the $(1,1)$-entry stemming from the loop at the root. For our new choice of target graph, we also have changes to $\mathbf{h}(G_0,u_0)$ and $\mathbf{h}(G_0,v_0)$, with specifically
\[
\mathbf{h}(G_0,u_0) = 
\begin{bmatrix}
(1 + x) + x(1 + y) + x((1 + x) + y(1 + z))\\
(1 + x) + x(1 + y) + y(1 + y + z)\\
(1 + x) + y(1 + z) + z(1 + z)\\
1 + y + z
\end{bmatrix}
\]
and
\[\mathbf{h}(G_0,v_0) = 
\begin{bmatrix}
((1 + x) + x(1 + y))(1 + x)\\
((1 + x) + y(1 + z))(1 + y)\\
(1 + y + z)(1 + z)\\
1 + z
\end{bmatrix}.
\]

By \cref{cor:coefficients}, we can write
\[ \hom(P_{n+4},H) = c_1 \lambda_1^n + c_2 \lambda_2^n +c_3 \lambda_3^n +c_4 \lambda_4^n \]
and
\[ \hom(E_{n+4},H) = c'_1 \lambda_1^n + c'_2 \lambda_2^n +c'_3 \lambda_3^n +c'_4 \lambda_4^n, \]
where $\lambda_1, \lambda_2, \lambda_3$, and $\lambda_4$ are the eigenvalues of $\change{M}$. Similar to our earlier argument in this proof, if, for some choice of $x,y$, and $z$, we see (without loss of generality)
\[ \lambda_1 > \max\{|\lambda_2|, |\lambda_3|, |\lambda_4|\},\]
then for sufficiently large $n$, if $c_1>c'_1$ then $\hom(P_{n+4},\change{H}) > \hom(E_{n+4},\change{H})$.

The characteristic polynomial of $\change{M}$ is 
\[
\change{f}(t):=t^4-t^3-(x+y+z)t^2+(y+z)t+xz.
\]
The roots of $\change{f}(t)$ can be written down explicitly, albeit with expressions that are significantly more challenging than those that appear in the roots of the characteristic polynomial of $M$. Using \mathematica, we see that at $(x,y,z) = (400, 3, 800)$, we have $\lambda_1 > 28.393$ and $\max\{|\lambda_2|, |\lambda_3|, |\lambda_4|\} < 28.39$, and further that
\begin{equation} \label{inq-hat-coefficients}
c'_1 < 11653988859 < 11654710565 < c_1, 
\end{equation}
which numerically establishes the second claim of the theorem. For a more formal treatment, we use the same approach as in the proof of the first claim of the theorem: use rational approximations to the roots of the characteristic equation to rigorously verify inequality \eqref{inq-hat-coefficients}. We assume throughout the ensuing discussion that $(x,y,z)$ is such that $\change{f}(t)$ has four real roots with distinct magnitudes, and furthermore that the root of largest magnitude is positive. These conditions will be satisfied when we apply our conclusions to the specific choice $(x,y,z)=(400,3,800)$. We denote the roots by $\lambda_i$, $i=1,2,3,4$ with $\lambda_1 > \max\{|\lambda_2|, |\lambda_3|, |\lambda_4|\}$.  

For notational convenience, we set $p_n = \hom(P_{n+4},\change{T}(x,y,z))$ and $e_n = \hom(E_{n+4},\change{T}(x,y,z))$. Using \cref{thm:extension_framework} and the derivations of earlier in this proof, we have the following equalities that allow us to compute $p_n$ and $e_n$:
\begin{equation} \label{eq-pn-comp}
p_n = 
\begin{bmatrix} 1&x&xy&xyz \end{bmatrix}
\begin{bmatrix} 1&x&0&0\\1&0&y&0\\0&1&0&z\\0&0&1&0\end{bmatrix}^n 
\begin{bmatrix}
(1 + x) + x(1 + y) + x((1 + x) + y(1 + z))\\
(1 + x) + x(1 + y) + y(1 + y + z)\\
(1 + x) + y(1 + z) + z(1 + z)\\
1 + y + z
\end{bmatrix}
\end{equation}
and
\begin{equation} \label{eq-en-comp}
e_n = 
\begin{bmatrix} 1&x&xy&xyz \end{bmatrix}
\begin{bmatrix} 1&x&0&0\\1&0&y&0\\0&1&0&z\\0&0&1&0\end{bmatrix}^n 
\begin{bmatrix}
((1 + x) + x(1 + y))(1 + x)\\
((1 + x) + y(1 + z))(1 + y)\\
(1 + y + z)(1 + z)\\
1 + z
\end{bmatrix}.
\end{equation}
Recall from \cref{cor:coefficients} that there are constants $c_i$, $c'_i$, $i=1,2,3,4$, such that $p_n =\sum_{i=1}^4 c_i\lambda_i^n$ and $e_n =\sum_{i=1}^4 c'_i\lambda_i^n$. This leads us to the following system of equations that we can solve for the $c_i$'s and $c'_i$'s:
\begin{equation} \label{eq-pn-formula}
\begin{bmatrix}
1&1&1&1\\
\lambda_1&\lambda_2&\lambda_3&\lambda_4\\
\lambda_1^2&\lambda_2^2&\lambda_3^2&\lambda_4^2\\
\lambda_1^3&\lambda_2^3&\lambda_3^3&\lambda_4^3
\end{bmatrix}
\begin{bmatrix}
c_1\\c_2\\c_3\\c_4
\end{bmatrix}
=
\begin{bmatrix}
p_0\\p_1\\p_2\\p_3
\end{bmatrix}
\end{equation}
and
\begin{equation} \label{eq-en-formula}
\begin{bmatrix}
1&1&1&1\\
\lambda_1&\lambda_2&\lambda_3&\lambda_4\\
\lambda_1^2&\lambda_2^2&\lambda_3^2&\lambda_4^2\\
\lambda_1^3&\lambda_2^3&\lambda_3^3&\lambda_4^3
\end{bmatrix}
\begin{bmatrix}
c'_1\\c'_2\\c'_3\\c'_4
\end{bmatrix}
=
\begin{bmatrix}
e_1\\e_2\\e_3\\e_4
\end{bmatrix}.
\end{equation} 
Inverting the Vandermonde matrix that appears in equations \eqref{eq-pn-formula} and \eqref{eq-en-formula}, we find that its first row is
\[
\frac{1}{(\lambda_1-\lambda_2)(\lambda_1-\lambda_3)(\lambda_1-\lambda_4)}\begin{bmatrix}
-\lambda_2\lambda_3\lambda_4 & \lambda_2\lambda_3 + \lambda_2\lambda_4 + \lambda_3\lambda_4 & -(\lambda_2+\lambda_3+\lambda_4)& 1
\end{bmatrix}.
\]
This gives the following expression for $c_1-c'_1$:
\begin{equation} \label{eq-c1-minus-c'1}
\frac{-(p_0-e_0)\lambda_2\lambda_3\lambda_4 + (p_1-e_1)(\lambda_2\lambda_3 + \lambda_2\lambda_4 + \lambda_3\lambda_4) -(p_2-e_2)(\lambda_2+\lambda_3+\lambda_4) + (p_3-e_3)}{(\lambda_1-\lambda_2)(\lambda_1-\lambda_3)(\lambda_1-\lambda_4)}.
\end{equation}
Using elementary symmetric functions to express the coefficients of a polynomial in terms of its roots, we have  
\begin{eqnarray*}
\lambda_1\lambda_2\lambda_3\lambda_4 & = & xz \\
\lambda_1\lambda_2\lambda_3 + \lambda_1\lambda_2\lambda_4 + \lambda_1\lambda_3\lambda_4 +\lambda_2\lambda_3\lambda_4 & = & -y-z \\
\lambda_1\lambda_2 + \lambda_1\lambda_3 + \lambda_1\lambda_4 + \lambda_2\lambda_3 + \lambda_2\lambda_4 + \lambda_3\lambda_4 & = & -x-y-z \\
\lambda_1 + \lambda_2 + \lambda_3 + \lambda_4 & = & 1 
\end{eqnarray*}
and so
\begin{eqnarray*}
\lambda_2\lambda_3\lambda_4 & = & \frac{xz}{\lambda_1} \\
\lambda_2\lambda_3 + \lambda_2\lambda_4 + \lambda_3\lambda_4 & = & \frac{-y-z}{\lambda_1} - \frac{xz}{\lambda_1^2} \\
\lambda_2+\lambda_3+\lambda_4 & = & 1 - \lambda_1.
\end{eqnarray*}
Plugging these values into expression \eqref{eq-c1-minus-c'1}, we get
\begin{equation} \label{eq-c1-minus-c'1-2}
c_1-c'_1 = \frac{-\frac{(p_0-e_0)xz}{\lambda_1} -(p_1-e_1)\left(\frac{y+z}{\lambda_1} + \frac{xz}{\lambda_1^2}\right) -(p_2-e_2)(1 - \lambda_1) + (p_3-e_3)}{(\lambda_1-\lambda_2)(\lambda_1-\lambda_3)(\lambda_1-\lambda_4)}.
\end{equation}
Now recall that we are assuming that $\change{f}(t)$ has a unique root of largest magnitude, $\lambda_1$, that is positive. It follows that $(\lambda_1-\lambda_2)(\lambda_1-\lambda_3)(\lambda_1-\lambda_4) > 0$, and so we conclude from equation \eqref{eq-c1-minus-c'1-2} that in order to show $c_1>c'_1$ (for a particular choice of $(x,y,z)$), it suffices to show 
\[
-\frac{(p_0-e_0)xz}{\lambda_1} -(p_1-e_1)\left(\frac{y+z}{\lambda_1} + \frac{xz}{\lambda_1^2}\right) -(p_2-e_2)(1 - \lambda_1) + (p_3-e_3) > 0
\]
or equivalently
\begin{equation} \label{eq-c1-minus-c'1-3}
(p_2-e_2)\lambda_1^3 + ((p_3-e_3)-(p_2-e_2))\lambda_1^2 -((p_0-e_0)xz+(p_1-e_1)(y+z))\lambda_1 - (p_1-e_1)xz > 0. 
\end{equation}

We now specialize to $(x,y,z)=(400,3,800)$. For this choice we have $\change{f}(t)=t^4-t^3-1203t^2+803t+320000$. We can check (within the rationals) that
\[
\begin{array}{rcccl}
\change{f}(28.394) & > & 0 & > & \change{f}(28.393), \\
\change{f}(20.429) & < & 0 & < & \change{f}(20.428), \\
\change{f}(-19.435) & > & 0 & > & \change{f}(-19.436),
\end{array}
\]
and
\[
\begin{array}{rcccl}
\change{f}(-28.385) & < & 0 & < & -28.386.
\end{array}
\]
It follows that $\change{f}(t)$ has four distinct real roots, the largest of which (in magnitude), $\lambda_1$, satisfies $28.393 < \lambda_1 < 28.394$, and so for this choice of $(x,y,z)$, all the assumptions we have made thus far on the roots of the characteristic polynomial of $\change{M}$ are satisfied. We can also directly compute, via equations \eqref{eq-pn-comp} and \eqref{eq-en-comp}, 
\[
\begin{array}{rcl}
p_0-e_0 & = & 0,\\
p_1-e_1 & = & -263683600,\\
p_2-e_2 & = & -5066563600,\\
p_3-e_3 & = & 42277585600.
\end{array}
\]
It follows from equation \eqref{eq-c1-minus-c'1-3} that to show $c_1>c'_1$ in the case $(x,y,z)=(400,3,800)$, it suffices to show that $h(\lambda_1)>0$ where 
\[
h(t) = -5066563600t^3 + 47344149200t^2 + 211737930800t + 84378752000000.
\]
The discriminant of $h(t)$, which can be calculated exactly in rationals, is negative, and so $h(t)$ has only one real root. We can (in rationals) compute that $h(28.394)>0$. Given the sign of the leading coefficient of $h(t)$, it follows that $h(t)>0$ for all $t\le 28.394$, and so in particular, since $\lambda_1 < 28.394$, it follows that $h(\lambda_1)>0$. We conclude that for $(x,y,x)=(400,3,800)$ we have $c_1 > c'_1$ and so 
\[
\hom(P_{n+4},\change{T}(400,3,800)) > \hom(E_{n+4},\change{T}(400,3,800))
\] 
holds for all sufficiently large $n$. This completes the rigorous justification of the second part of \cref{thm:path_not_eventually_minimizer}.
\end{proof}

\section{Open problems} \label{sec:open_problems}

We conclude with a few questions that remain open and which we find of interest. Csikv\'{a}ri and Lin raise the following natural problem~\cite[Problem 6.2]{CsikvariLin2014}:
\begin{problem}[Csikv\'{a}ri, Lin] \label{prob:CL}
Characterize those $H$ for which $\hom(P_n,H) \le \hom(T_n,H)$ holds for all $n$ and all $T_n \in {\mathcal T}_n$.
\end{problem}
\noindent They also propose a weaker variant of~\cref{prob:CL} in which ``all $n$'' is replaced by ``all sufficiently large $n$''. In the present paper we have shown that there are $H$'s which do not satisfy the condition imposed by the weaker variant of \cref{prob:CL}. Settling the problem in general remains open.    

It is possible to use \cref{thm:extension_framework} to confirm that $E_7$ is the tree on seven vertices which admits the fewest $T(18,3,32)$-colorings. We proved in \cref{thm:path_not_eventually_minimizer} that $E_n$ admits fewer $\widehat{T}(400,3,800)$-colorings than $P_n$ when $n$ is sufficiently large, but it remains open which tree on $n$ vertices admits the fewest $\widehat{T}(400,3,800)$-colorings. 

More generally, we note that both $E_n$ and $P_n$ have large diameters. If $T_n$ has diameter 2, then $T_n$ must be a star graph and so $\hom(T_n,H) \ge \hom(P_n,H)$ by \cref{thm:siderenko}. This leads to the following question:

\begin{problem}
    For $H=\widehat{T}(400,3,800)$, or any other graph, what is the largest function $d_H(n)$ for which all trees of diameter at most $d_H(n)$ admit at least as many $H$-colorings as $P_n$?
\end{problem}

In \cref{thm:path_not_eventually_minimizer} we exhibited a target graph $H$ on $78$ vertices that is Leontovich at some $n$ (specifically $n=7$). By contrast, in~\cite{GalvinMarmorinoMcMillonNirRedlich2025} the authors prove that if $H$ has fewer than four vertices, it is not Leontovich for any $n$. This suggests the question:

\begin{problem}\label{prob:xyz}
    What is the smallest value of $m$ such that there exists a target graph $H$ on $m$ vertices that is Leontovich at some $n$?
\end{problem}

At present, we know $4 \le m \le 78$, but narrowing this range is of interest. The range of possible values of the order of the smallest strongly Leontovich graph is even larger: combining \cref{thm:path_not_eventually_minimizer} with the results of~\cite{GalvinMarmorinoMcMillonNirRedlich2025} demonstrates it must be between $4$ and $|\widehat{T}(400,3,800)| = 961601$, though no attempt was made to minimize the order of the strongly Leontovich target graph in \cref{thm:path_not_eventually_minimizer}. We note, for example, that $\widehat{T}(313,3,646)$ (which has 607847 vertices) numerically appears to be strongly Leontovich.

Finally, recall from the introduction that Csikv\'ari and Lin~\cite{CsikvariLin2014} showed that for any connected $H$ and any sequence $(T_n)_{n \in {\mathbb N}}$ of trees $T_n$ with $T_n \in {\mathcal T}_n$ we have 
\[
\liminf_{n \rightarrow \infty} \left(\frac{\hom(T_n,H)}{\hom(P_n,H)}\right)^{1/n} \geq 1.
\]
This does not rule out the possibility that $\hom(T_n,H) < \hom(P_n,H)$ infinitely often, and indeed in \cref{thm:path_not_eventually_minimizer} we find a connected target graph $H$ and a sequence of trees $(T_n)_{n \geq 4}$ with $T_n \in {\mathcal T}(n)$ with
\[
\lim_{n \to \infty} \frac{\hom(T_n,H)}{\hom(P_n,H)} = \frac{c'_1}{c_1} < 0.99994. 
\]
\begin{problem}\label{prob:ratio}
    How small can $\liminf_{n \rightarrow \infty} (\hom(T_n,H)/\hom(P_n,H))$ be made? Can it be made arbitrarily small?
\end{problem}

\section*{Acknowledgments}

This work was started at the workshop ``Graph Theory: structural properties, labelings, and connections to applications'' hosted by the American Institute of Mathematics (AIM), Pasedena CA, July 22--July 26, 2024. The authors thank AIM and the organizers of the workshop for facilitating their collaboration, and thank Aysel Erey for helpful conversations.

\end{document}